\documentclass[12pt, reqno]{amsart}
\setcounter{tocdepth}{1}
\usepackage{amsmath}
\usepackage{amssymb}
\usepackage{epsfig}
\usepackage{graphicx}
\usepackage{color}
\usepackage{fullpage}
\usepackage{comment}
\definecolor{shadecolor}{gray}{0.875}
\usepackage{amscd}
\usepackage{stmaryrd}

\usepackage{ulem}
\usepackage{amsthm}
\usepackage[backref=]{hyperref} 

\usepackage{csquotes}
\usepackage{enumerate}
\RequirePackage{tikz}
\usepackage{tikz-cd}

\setcounter{tocdepth}{3}

\let\oldtocsection=\tocsection

\let\oldtocsubsection=\tocsubsection

\let\oldtocsubsubsection=\tocsubsubsection

\renewcommand{\tocsection}[2]{\hspace{0em}\oldtocsection{#1}{#2}}
\renewcommand{\tocsubsection}[2]{\hspace{1em}\oldtocsubsection{#1}{#2}}
\renewcommand{\tocsubsubsection}[2]{\hspace{2em}\oldtocsubsubsection{#1}{#2}}

\usepackage{etoolbox}   
\makeatletter
\patchcmd{\@maketitle}{\artauthors}{{\artauthors}}{}{}
\makeatother


\makeatletter

\newcommand{\Rmnum}[1]{\expandafter\@slowromancap\romannumeral #1@}
\makeatother

\numberwithin{equation}{section}

\input xy
\xyoption{all}

\calclayout
\allowdisplaybreaks[3]

\theoremstyle{plain}
\newtheorem{prop}{Proposition}[section]

\newtheorem{theo}[prop]{Theorem}
\newtheorem{coro}[prop]{Corollary}

\newtheorem{lemm}[prop]{Lemma}

\theoremstyle{definition}
\newtheorem{defi}[prop]{Definition}

\newtheorem{ques}[prop]{Question}
\newtheorem{conj}[prop]{Conjecture}

\newtheorem{rema}[prop]{Remark}

\newtheorem{exam}[prop]{Example}

\def\cC{{\mathcal C}}

\def\cL{{\mathcal L}}

\def\rk{{\mathrm{rk}}}

\def\bA{{\mathbb A}}

\def\bM{{\mathbb M}}

\def\bP{{\mathbb P}}
\def\bQ{{\mathbb Q}}
\def\bR{{\mathbb R}}
\def\bZ{{\mathbb Z}}

\def\Eff{\overline{\mathrm{Eff}}}
\def\Pic{\mathrm{Pic}}

\def\Pic{\mathrm{Pic}}

\def\Sing{\mathrm{Sing}}

\def\Gal{\mathrm{Gal}}

\def\Proj{\mathrm{Proj}}
\def\ch{\mathrm{char}}

\makeatother
\makeatletter

\author{Runxuan Gao}
\address{Graduate School of Mathematics, Nagoya University, Furocho Chikusa-ku, Nagoya, 464-8602, Japan}
\email{m20015x@math.nagoya-u.ac.jp}

\title[Examples]{A Zariski dense exceptional set in Manin's Conjecture: dimension $2$}

\begin{document}

\date{\today}

\maketitle

\begin{abstract}
	Recently, Lehmann, Sengupta, and Tanimoto proposed a conjectural construction of the exceptional set in Manin's Conjecture, which we call the geometric exceptional set. We construct a del Pezzo surface of degree $1$ whose geometric exceptional set is Zariski dense.
	In particular, this provides the first counterexample to the original version of Manin's Conjecture in dimension $2$ in characteristic $0$.
	Assuming the finiteness of Tate-Shafarevich groups of elliptic curves over $\bQ$ with $j$-invariant $0$, we show that there are infinitely many such counterexamples.
\end{abstract}

\tableofcontents
\newpage
\section{Introduction}
\
Manin's Conjecture predicts that the asymptotic behavior of rational points on Fano varieties is controlled by their geometric invariants.
Let $X$ be a projective variety over a number field $F$ and let $\cL=(L,\Vert\cdot\Vert)$ be an adelically metrized ample line bundle on $X$.
Then one can associate to $\cL$ a unique height function (see, e.g., \cite{Peyre03})
$$H_\cL:X(F)\rightarrow \bR_{\geq0}.$$
The ampleness of $L$ implies that the set
$$\{ P\in X(F)\mid H_\cL(P)\leq T \}$$
is finite for any $T\in\bR$.
The counting function is defined by
\begin{equation*}
	N(Q,\cL,T):=\#\left\{P\in Q\mid H_\cL(P)\leq T\right\}
\end{equation*}
for any subset $Q\subset X(F)$.

Before stating the conjecture, we need to recall the notion of thin sets in the sense of Serre.
\begin{defi}
	Let $f:Y\rightarrow X$ be a morphism between varieties $Y$ and $X$ defined over a field $F$ and suppose $f$ is generically finite onto its image. Then $f$ is called
	\begin{enumerate}[\upshape
		(1)]
		\item a thin map of type \Rmnum{1} if $f$ is not dominant, and
		\item a thin map of type \Rmnum{2} if $f$ is dominant but not birational.
	\end{enumerate}
	A thin map is a thin map of type \Rmnum{1} or \Rmnum{2}. A thin subset of $X(F)$ is a subset of a finite union $\cup_j f(Y_j(F))$ where $f:Y_j\rightarrow X$ are thin maps of type \Rmnum{1} or \Rmnum{2}.
\end{defi}


A contemporary version of Manin's Conjecture is as follows.
\begin{conj}\label{conj} (Manin's Conjecture)
	Let $X$ be a smooth projective Fano variety over a number field $F$ and let $\cL=(L,\|\cdot\|)$ be an adelically metrized ample line bundle on $X$. Suppose that the set $X(F)$ is not thin. Then there exists a thin subset $Z\subset X(F)$ such that
	\begin{equation}\label{eq1.1}
		N(X(F)\backslash Z,\cL,T)\sim c(F,Z,\cL)T^{a(X,L)}\log(T)^{b(F,X,L)-1}
	\end{equation}
	as $T\rightarrow\infty$, where $a(X,L)$ and $b(F,X,L)$ are geometric invariants which we will define in Section \ref{sec3.1}, and $c(F,Z,\cL)$ is Peyre-Batyrev-Tschinkel’s constant introduced in \cite{Peyre,BT}.
\end{conj}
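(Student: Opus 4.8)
The statement as worded is the general form of Manin's Conjecture, for which no unconditional proof is known for arbitrary smooth Fano $X$; what I can offer is the strategy that has succeeded in the cases where it is a theorem, carried out case by case on the geometry of $X$. The plan is the \emph{height zeta function} approach: introduce
\begin{equation*}
	Z(X(F)\setminus Z,\cL,s)=\sum_{P\in X(F)\setminus Z}H_\cL(P)^{-s},
\end{equation*}
which converges absolutely for $\mathrm{Re}(s)\gg 0$, and reduce \eqref{eq1.1} to a Tauberian theorem of Delange--Ikehara type. Concretely, I would show that this Dirichlet series extends meromorphically past $\mathrm{Re}(s)=a(X,L)$, that its rightmost pole sits exactly at $s=a(X,L)$ with order precisely $b(F,X,L)$, and that the leading Laurent coefficient there equals Peyre--Batyrev--Tschinkel's constant $c(F,Z,\cL)$ up to the elementary normalising factor $a\,(b-1)!$. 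The asymptotic with the predicted power of $T$ and $\log T$ then follows mechanically.

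First I would fix the thin set $Z$, following the refinement of Lehmann--Sengupta--Tanimoto: $Z$ should be the union of the images of all \enquote{non-generic} maps $f\colon Y\to X$, namely thin maps of type \Rmnum{1} whose source has $a$-invariant strictly larger than $a(X,L)$, or equal $a$-invariant but at least as large $b$-invariant (the accumulating subvarieties, such as the lines on a cubic surface), together with thin maps of type \Rmnum{2} --- dominant covers that do not decrease the pair $(a,b)$, as in the Batyrev--Tschinkel bundle examples. The hard part here is to verify that this $Z$ is genuinely thin \emph{and} that removing it does not also swallow the main term: one must show the complement still carries enough rational points to produce $c(F,Z,\cL)T^{a}\log(T)^{b-1}$. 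For del Pezzo surfaces of degree $1$, the object of this paper, this is exactly where the subtlety lives, and is the reason one cannot expect to take $Z$ Zariski closed.

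Next, the analytic continuation itself depends entirely on the structure of $X$. When $X$ carries a large group action --- equivariant compactifications of tori, of $\mathbb{G}_a^n$, of unipotent groups, or generalized flag varieties --- I would expand $Z(X(F)\setminus Z,\cL,s)$ along characters and identify it with Eisenstein series or Igusa-type adelic integrals, whose continuation is classical. When no such symmetry is available, for instance for general del Pezzo surfaces, I would instead pass to a \emph{universal torsor} $\cT\to X$, lift $F$-points of $X$ to integral points on $\cT$ subject to coprimality conditions, and count these lattice points in the height region either by elementary lattice-point geometry (effective when $\dim X$ is small and the Picard rank large) or by the Hardy--Littlewood circle method (when the Picard rank is small and $\cT$ is close to a complete intersection). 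The constant is then reassembled from the resulting archimedean and $p$-adic densities and matched against Peyre's Tamagawa measure on $\prod_v X(F_v)$.

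The main obstacle is that none of these methods applies uniformly: the torsor plus circle-method route stalls precisely when the relevant exponential sums resist estimation, typically once $\dim X$ is too small relative to the degree of the equations cutting out $\cT$ (the regime of degree-one del Pezzo surfaces being essentially the hardest), and the harmonic-analysis route requires an extraneous group action that a generic Fano variety simply does not possess. For the case at the heart of this paper, the genuine difficulty is not even the continuation of the zeta function but the \emph{correct description of $Z$}: one must locate the exceptional contributions on a set that is thin yet Zariski dense, so that the classical formulation with closed $Z$ fails while the thin-set formulation above can still survive.
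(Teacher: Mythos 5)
You have correctly identified that this statement is a conjecture, not a theorem: the paper offers no proof of it and does not claim one, so there is nothing in the paper for your sketch to be measured against. Your survey of the standard attack routes is accurate as a description of the literature --- the height zeta function and Tauberian reduction, harmonic analysis on equivariant compactifications for the toric and related cases, universal torsors with lattice-point counting or the circle method for del Pezzo surfaces, and the Lehmann--Sengupta--Tanimoto prescription for the thin set $Z$ --- and these are indeed the methods behind the confirmed cases the paper lists in its introduction. But none of this constitutes a proof, as you acknowledge, and the conjecture remains open in general; in particular it is open for the degree-one del Pezzo surface $S$ constructed in this paper.

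One point worth sharpening in your closing remark: the paper does not merely suggest that the classical formulation with Zariski-closed $Z$ \emph{might} fail in dimension $2$ --- its main result (Theorem \ref{the1} together with Corollaries \ref{cor1.5} and \ref{cor1.6}) exhibits a specific surface over $\bQ(\zeta)$ for which the LST exceptional set is Zariski dense and for which, via the Breen--Lehmann-type result quoted as Theorem \ref{the5.1}, the closed-set version of the asymptotic \eqref{eq1.1} provably fails for a suitable adelic metric on $-K_S$. So the subtlety you flag at the end (a thin but dense $Z$) is not a speculative difficulty for the proof strategy; it is the established content of the paper, and any future proof of the conjecture for such surfaces must take the thin-set formulation as its starting point. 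Beyond that, your proposal should not be read as a gap in the paper, since the paper never purports to prove the conjecture.
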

The exceptional set $Z$ was assumed to be a Zariski non-dense subset of $X$ in the original version of the conjecture formulated in \cite{FMT89,BM}.
However, a counterexample to this version was discovered in \cite{batyrev1996rational}.
As such, it is suggested in \cite{Peyre03} that we assume $Z$ to be a thin set,
and such a conjecture was first formulated in \cite{RUD19}.
The thin set version of Manin's Conjecture has been established in several cases, such as \cite{RUD19,BHB18,blomer2022Manin,bonolis2022density},
and no counterexample has been found yet.

Recently, Lehmann, Sengupta, and Tanimoto proposed a conjectural construction of the exceptional set $Z$ in \cite{LST18}, which we may call the geometric exceptional set to avoid ambiguity.
The techniques are formulated in a series of papers \cite{HTT15,LTT18,LTDuke,Sen21}.

The confirmed cases of Manin's Conjecture in dimension $2$ include
\begin{enumerate}[(1)]
	\item the singular cubic surface in $\bP_\bQ^3$ defined by $x_1x_2^2 +x_2x_0^2 +x_3^3 = 0$ \cite{dBBD07};
	\item toric varieties over number fields \cite{BT-0}, such as non-singular del Pezzo surfaces of degree $\geq6$;
	\item non-singular del Pezzo surfaces of degree $4$ in $\bP_\bQ^4$ with a special kind of conic bundle structure \cite{IBB11};
	\item a family of Ch\^atelet surfaces over $\bQ$ \cite{dBBP12}.
\end{enumerate}
The exceptional sets in all confirmed cases in dimension $2$ are not Zariski dense.
Moreover, it is shown in \cite[Section 9]{LT19} that for general smooth del Pezzo surfaces over number fields with anticanonical height, the geometric exceptional set proposed in \cite{LST18} is not Zariski dense.

When $X$ is a del Pezzo surface of degree $1$,
the above result is based on the assumption that the curve parametrizing all rational curves in $\lvert-2K_X\rvert$ is irreducible and of genus $\geq2$,
which is only known to be satisfied when $\overline{X}$ is general in moduli by \cite{Lubbes14}, where $\overline{X}$ refers to the base change of $X$ to an algebraic closure (see \cite[Theorem II.1.2]{kollar2013rational} for the dimension of the parametrizing variety).
So it is natural to ask the following question.

\begin{ques}{\rm (\cite[Question 9.7]{LT19})}\label{ques1}
	Let $X$ be a del Pezzo surface of degree $1$ and Picard rank $1$. 
	Let $M$ denote the curve parametrizing all rational curves in $\lvert-2K_X \rvert$. Does every component of $M$ have genus greater than $1$?
\end{ques}
The original idea in \cite{LT19} is to confirm that the conjectural exceptional sets for all surfaces are Zariski non-dense by answering this question affirmatively.
But in this paper, we give a negative answer to this question by constructing a counterexample, which is summarized in the following theorem.
\begin{theo}\label{the1}
	Let $m\in\bZ^\ast$ be a nonzero integer and $S$ be the surface defined by
	\begin{equation*}
		w^2=z^3+m^2x^6+m^2y^6
	\end{equation*}
	in the weighted projective space $\bP(1,1,2,3)$ over a number field $k$. Then 
	\begin{enumerate}[\upshape(1)]
		\item $S$ is a del Pezzo surface of degree $1$.
		\item There exists an elliptic curve $E$ parametrizing rational curves in $\lvert-2K_S\rvert$.
		Namely, we have a diagram of varieties with canonical morphisms
		\[\begin{tikzcd}
			{} & {	S\times E\supset\cC} & S \\
			& \hspace{13mm}E
			\arrow["\mu", from=1-2, to=1-3]
			\arrow["\pi"', shift left=8, from=1-2, to=2-2]
		\end{tikzcd}\]
		such that $E$ is a smooth curve of genus $1$, $\mu$ is dominant and general fibers of $\pi$ are rational curves in $\lvert-2K_S\rvert$.
		\item When $k$ is a field which contains $\bQ(\sqrt{-3})$, there exists a section $s:E\rightarrow\cC$ of $\pi$, 
		which intersects all but finitely many fibers in smooth points.
	\end{enumerate}
\end{theo}

\begin{rema}
	It is worth mentioning that in $(2)$ of Theorem $\ref{the1}$, an elliptic curve has the lowest possible genus for an irreducible curve parametrizing rational curves in $\lvert-2K_S\rvert$, by \cite[Proposition 24]{Lubbes14} combined with Proposition \ref{prop1} (2).
\end{rema}

Theorem $\ref{the1}$ in particular gives a negative answer to Question \ref{ques1}.
Moreover, we can prove that the geometric exceptional set of $S$ is dense under two additional assumptions.

\begin{theo}\label{thm2}
	In the notation of Theorem \ref{the1}, suppose $k$ is a field which contains $\bQ(\sqrt{-3})$ and
	\begin{enumerate}[\upshape(1)]
		\item $E$ has positive Mordell-Weil rank, and
		\item $S$ has Picard rank $1$.
	\end{enumerate}
	Then the geometric exceptional set of $S$ is Zariski dense.
\end{theo}
There exists an integer $m$ such that the surface $S$ satisfies these assumptions.
Moreover, if we assume a conjecture on elliptic curves, then there exist infinitely many such integers $m$.

\begin{prop}\label{prop1.6}
	The assumptions in Theorem \ref{thm2} are satisfied when $m=7$ and $k=\bQ(\sqrt{-3})$. Moreover, if we assume the Tate-Shafarevich groups of elliptic curves over $\bQ$ with $j$-invariant $0$ are finite, 
	then when
	\begin{equation*}
		m\in\left\{ p: \textrm{\rm prime number}\mid p\equiv 4,7 \textrm{ \rm or }8 \,\mod\, 9 \right\},
	\end{equation*}
	the corresponding surface $S$ satisfies the assumptions.
	By Dirichlet’s theorem on arithmetic progressions, there are infinitely many such $m$.
\end{prop}

To obtain a concrete counterexample to the original version of Manin's Conjecture, we appeal to \cite[Corollary 1.3]{BL16} which shows that when a Fano variety admits a Zariski dense set of saturated subvarieties then there is a choice of adelic metric for which the exceptional set must be Zariski dense.
Thus we obtain the following corollary.
\begin{coro}\label{cor1.6}
	Suppose the surface $S$ defined in Theorem $\ref{the1}$ satisfies the assumptions in Theorem \ref{thm2}.
	Then for any non-empty open subset $U\subset S$ and any real number $A\geq 0$, there exists an adelic metric on $L$ such that
	\begin{equation*}
		\liminf_{T\rightarrow\infty}\frac{N(U,\cL,T)}{T}>(1+A)c(F,Z,\cL).
	\end{equation*}
\end{coro}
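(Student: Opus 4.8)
The plan is to deduce Corollary~\ref{cor1.6} from Theorem~\ref{the5.1} (that is, \cite[Corollary 1.3]{BL16}) applied to the family of rational curves produced in Theorem~\ref{the1}, with Corollary~\ref{cor1.5} supplying Zariski density. Recall first that because $S$ is a del Pezzo surface of degree $1$ with Picard rank $1$ (Theorem~\ref{the1}(1)) the relevant invariants are $a(S,-K_S)=1$ and $b(F,S,-K_S)=1$, so Manin's prediction on any open subset has the shape $cT$; the content of the corollary is that, once the metric on $L=-K_S$ is allowed to vary, this linear rate persists but with an implied constant exceeding $(1+A)$ times Peyre's constant for any prescribed $A$.

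First I would assemble the geometric input in the exact form Theorem~\ref{the5.1} expects. By Theorem~\ref{the1}(2) the curve $E$ is an elliptic curve of Mordell--Weil rank $1$ over $\bQ$, so $E(\bQ)$, hence $E(F)$, is infinite; for $p\in E(F)$ set $C_p:=\mu(\pi^{-1}(p))$, which is a rational curve in $|-2K_S|$ and therefore has anticanonical degree $-K_S\cdot C_p = 2K_S^2 = 2$. By Theorem~\ref{the1}(3) the section $s$ meets $\pi^{-1}(p)$ at a smooth point for all but finitely many $p$, so for infinitely many $p\in E(F)$ the curve $C_p$ is geometrically rational with a smooth $F$-rational point, whence $C_p\cong\bP^1_F$; being of anticanonical degree $2$, it satisfies $N(C_p(F),\cL,T)\asymp T$ for the fixed anticanonical metric. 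Finally, since $\mu:\cC\to S$ is dominant and $\cC$ is irreducible of dimension $2$, the union $\bigcup_p C_p$ is Zariski dense in $S$ (this is Corollary~\ref{cor1.5}), and all the rational points on the $C_p$ lie in the exceptional set $Z$ of Theorem~\ref{the1}.

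This places us precisely in the hypothesis of Theorem~\ref{the5.1}: a Zariski dense, positive-dimensional family of rational curves of bounded anticanonical degree, each carrying an $F$-rational point, with rational points forced into $Z$. The conclusion of that theorem then yields, for every non-empty open $U\subset S$ and every $A\geq0$, an adelic metric on $L$ for which $\liminf_{T\to\infty} N(U,\cL,T)/T > (1+A)\,c(F,Z,\cL)$; the mechanism being a suitable local modification of the metric on $L$ that makes the rational points on many of the $C_p$ meeting $U$ contribute disproportionately to the count. I expect the only real work is to check that the \emph{quantitative} requirements of Theorem~\ref{the5.1} are met: Zariski density of the $C_p$ alone is not enough, one needs sufficiently many of them through $U$ with uniformly bounded degree, and it is the two-dimensional dominant family $\cC\to S$, rather than a single curve, that provides this. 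A minor point of bookkeeping is to discard the finitely many $p$ at which $s$ is not smooth before invoking $C_p\cong\bP^1_F$, which is harmless since $E(F)$ is infinite; and one should record that $c(F,Z,\cL)$ is finite and strictly positive so that the stated strict inequality is meaningful.
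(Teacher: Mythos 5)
Your proposal is correct and takes essentially the same route as the paper: it deduces the corollary from \cite[Corollary 1.3]{BL16} (Theorem \ref{the5.1}) by checking that the rational curves in $\left|-2K_S\right|$ satisfy the saturation condition --- each has anticanonical degree $2$, hence $N(C_p,\cL,T)\asymp T$ by Manin's conjecture for $\bP^1$, matching the expected linear growth on $S$ where $a=b=1$ --- and that their union is Zariski dense by Corollary \ref{cor1.5}. The only slight inaccuracy is the claim $C_p\cong\bP^1_F$: the curves $C_p$ are singular (with two singular points, as computed in Section \ref{secsec}), so it is the normalization of $C_p$, not $C_p$ itself, that is $\bP^1_F$, which does not affect the counting argument.
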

In other words, the original version of Manin's Conjecture does not hold for the surface $S$ satisfying the assumptions in Theorem \ref{thm2}. By Proposition \ref{prop1.6}, such a surface exists.
If we assume that the Tate-Shafarevich groups of elliptic curves over $\bQ$ with $j$-invariant $0$ are finite, then we have infinitely many such surfaces.

As a consequence, we obtain what appears to be the first example of a Zariski dense geometric exceptional set in dimension $2$ over fields of characteristic $0$.
It is worth mentioning that \cite{beheshti2021rational} provides the first example of a Zariski dense geometric exceptional set in dimension $2$ in positive characteristic.

The paper is organized as follows.
In Section 2 we review background knowledge and calculate the geometric exceptional sets for del Pezzo surfaces of degree $1$ and Picard rank $1$.
In Section 3 we prove Theorem \ref{the1}. In Section 4 we prove Theorem \ref{thm2}, Proposition \ref{prop1.6} and Corollary \ref{cor1.6}.
In Section 5 we try to compute the moduli space of rational curves in $\lvert-2K_X \rvert$ systematically. Although it seems difficult to determine all the irreducible components, we find three other elliptic components among them.
\bigskip

\noindent
{\bf Acknowledgments.}
The author would like to thank Sho Tanimoto for suggesting the problem and for many stimulating conversations.
The author also wishes to thank Brian Lehmann and Anthony V\'{a}rilly-Alvarado for useful comments and Julian Lyczak for pointing out an error in Proposition $\ref{prop1}$ of a previous version of this paper.
This paper is based on the author’s master's thesis at Nagoya University.
The author was partially supported by JST FOREST program Grant number JPMJFR212Z and JSPS Bilateral Joint Research Projects Grant number JPJSBP120219935.

\section{Exceptional sets in Manin's Conjecture}

\subsection{Geometric invariants in Manin’s Conjecture}
\label{sec3.1}
There are three invariants appearing in Conjecture $\ref{conj}$ and we will discuss $a(X,L)$ and $b(F,X,L)$ in this subsection.  

We first recall the definition of a pseudo-effective cone. For any projective variety $X$ there is an intersection pairing between $N^1(X):=N^1(X)_\bZ\otimes \bR$ and $N_1(X):=N_{1}(X)_\bZ\otimes \bR$, where $N^1(X)_\bZ$ and $N_{1}(X)_\bZ$ are the quotient of the group of Cartier divisors and of $1$-cycles by numerical equivalence, respectively.
We attach to $N^1(X)$ and $N_1(X)$ the Euclidean topology.
Then the pseudo-effective cone $\Eff^1(X)$ is defined to be the closure of the convex cone generated by classes of effective divisors.
It turns out that the cone of big divisors equals the interior of $\Eff^1(X)$.

\begin{defi}
	Let $X$ be a smooth projective variety and let $L$ be a big and nef divisor. The $a$-invariant or Fujita invariant is defined to be
	$$a(X,L):=\min\{ t\in\bR\mid K_X+tL\in \Eff^1(X)\}.$$
	In characteristic zero, the $a$-invariant of a non-smooth variety is defined by a resolution of singularities, see Remark $\ref{rem}$.
\end{defi}
\begin{exam}
	If $X$ is a Fano variety and $L=-K_X$, then we have $a(X,L)=1$.
\end{exam}
\begin{exam}
	If $C$ is an (irreducible but not necessarily smooth) rational curve and $L$ a big and nef Cartier divisor on it, then we have $a(C,L)= 2/(C\cdot L)$, since the normalization of $C$ is birational to $\bP^1$ and a divisor $D$ is contained in $\Eff^1(\bP^1)$ if and only if $\deg(D)\geq0$.
\end{exam}
Let $K$ be a closed convex cone in a finite-dimensional real vector space $V$.
Recall that a subcone $C\subseteq K$ is called a face if it has the property that if $u+v\in C$ for some vectors $u,v\in K$, then $u\in C$ and $v\in C$. A face $C$ of a cone $K$ is said to be supported if there exists a linear functional $l: V\rightarrow \bR$ such that $l(v)\geq 0$ for every $v\in K$ and $C=\left\{ v\in K\mid l(v)=0 \right\}$.
\begin{defi}
	Let $X$ be a smooth geometrically integral projective variety over a field $F$ and let $L$ be a big and nef divisor on it. The $b$-invariant $b(F,X,L)$ is defined to be the codimension of the minimal supported face of $\Eff^1(X)$ containing $K_X+a(X,L)L$.
	
	In characteristic zero, the $b$-invariant of a non-smooth variety is defined by a resolution of singularities, see Remark $\ref{rem}$.
\end{defi}

\begin{exam}
	Let $X$ be a smooth Fano variety over a field $F$ and take $L=-K_X$, then $b(F,X,L)$ is equal to the Picard rank of $X$.
\end{exam}
For more examples and properties of $a$- and $b$-invariants, see \cite[Sections 3 and 4]{LT19}.
\begin{rema}\label{rem}
	When $X$ is a singular projective variety defined over a field $F$ of characteristic zero, we can not use the same definition to define $a(X,L)$ and $b(F,X,L)$ since the canonical divisor $K_X$ may not exist.
	Instead, we take a resolution of singularities $\phi:X^\prime\rightarrow X$ and set $a(X,L):=a(X^\prime,\phi^\ast L)$ and $b(F,X,L):=b(F,X^\prime,\phi^\ast L)$.
	These are well-defined by \cite[Proposition 2.7, Proposition 2.10]{HTT15}.
\end{rema}

\subsection{Adjoint rigid pair}\label{secadj}
\

For a smooth projective variety $X$ and a big and nef divisor $L$, we say the pair $(X,L)$ is adjoint rigid if for any sufficiently divisible positive integer $m$, we have $\dim H^0(X,m(K_X+a(X,L)L))=1$ \cite[Definition 3.10]{LT19}.

Let $X$ be a smooth geometrically uniruled projective variety and let $L$ be a big and nef divisor. Let $m$ be a sufficiently divisible positive integer such that $m(K_X+a(X,L)L)$ is a Cartier divisor.
It has been shown in \cite{BCHM} that the section ring
$$R(X,K_X+a(X,L)L):=\bigoplus_{d\geq0}H^0(X,md(K_X+a(X,L)L))$$
is finitely generated. Thus one can define a projective variety $Z:=\Proj\, R(X,K_X+a(X,L)L)$ and a rational map $\pi:X\dashrightarrow Z$ associated to the linear system $\lvert m(K_X+a(X,L)L) \rvert$, which is known as the canonical map for the pair $(X,a(X,L)L)$.
It follows that $(X,L)$ is adjoint rigid if and only if $\pi$ maps $X$ to a point.

\subsection{Geometric exceptional sets of Fano varieties}\label{sec3.2-}
%
Motivated by geometry, a conjectural construction of the exceptional set in Manin's Conjecture has been proposed in \cite[Section 5]{LST18}, which we call the geometric exceptional set. Although the construction works for a more general case, we will concentrate on the case of Fano varieties.
\begin{defi}\label{def_ges}
	Let $X$ be a smooth Fano variety over a number field and take $L=-K_X$.
	Then the geometric exceptional set $Z$ is the union of $f(Y(F))$ where $f:Y\rightarrow X$ varies over all thin maps in the following cases.
	\begin{enumerate}[(1)]
		\item When $(a(Y,f^\ast L),b(F,Y,f^\ast L))>(a(X,L),b(F,X,L))$ in lexicographical order.
		\item When $(a(Y,f^\ast L),b(F,Y,f^\ast L))=(a(X,L),b(F,X,L))$ and either
		\begin{enumerate}[(a)]
			\item $\dim Y<\dim X$, or
			\item $\dim Y=\dim X$ and $(Y,f^\ast L)$ is not adjoint rigid, or
			\item $\dim Y=\dim X$, the pair$(Y,f^\ast L)$ is adjoint rigid and $f$ is face contracting.
		\end{enumerate}
	\end{enumerate}
\end{defi}
See \cite[Definition 3.5]{LT19a} for the definition of face contracting morphisms.


\subsection{Geometric exceptional sets for del Pezzo surfaces of degree $1$ and Picard rank $1$}\label{sec3.2}
\

In this section we compute the geometric exceptional set for del Pezzo surface of degree $1$ and Picard rank $1$. The result is as follows.

\begin{prop}\label{propgesofdp1}
	let $X$ be a del Pezzo surface of degree $1$ and Picard rank $1$ defined over a number field, then the geometric exceptional set for $X$ is the union of rational points on rational curves in $\lvert-K_X \rvert$ and $\lvert-2K_X \rvert$.
\end{prop}

\begin{proof}
	In the notation of Definition \ref{def_ges}, the dimension of $Y$ is $1$ or $2$.
	
	When $\dim Y=2$, we have $a(Y,f^\ast L)\leq a(X,L)$ by \cite[Lemma 3.12]{LT19}.
	So only $Y$ with $a(Y,f^\ast L)= a(X,L)=1$ may have contributions to the geometric exceptional set.
	Suppose $(Y, f^\ast L)$ is not adjoint rigid,
	then the canonical map for the pair $(Y, a(Y,f^\ast L)f^\ast L)$ maps $Y$ to a curve,
	since the divisor $a(Y,f^\ast L)f^\ast L+K_Y$ is on the boundary of $\Eff^1(X)$ and so is not big.
	By \cite[Lemma 3.38 and Corollary 3.53]{KM98},
	a general fiber of the canonical map has the same $a$-value as $Y$ and is adjoint rigid with respect to $f^\ast L$.
	So we reduce to the case where $Y$ is a curve and $(Y,f^\ast L)$ is adjoint rigid.
	Thus in the case when $\dim Y=2$, we only need to consider $Y$ such that $(Y, f^\ast L)$ is adjoint rigid and $a(Y,f^\ast L)= a(X,L)=1$,
	but there is no such variety over $X$ by \cite[Theorem 6.2]{LTDuke}.

	Thus we reduce to the case $\dim Y=1$.
	Let $f:C\rightarrow X$ be a curve on $X$.
	Suppose $C$ is not a rational curve,
	then we have
	$$\dim\lvert K_C\rvert=g(C)-1\geq 0,$$
	So there exists an effective divisor linearly equivalent to $K_C$.
	Thus for any non-rational curve $C$ on $X$, we have $a(C,f^\ast L)\leq0<a(X,L)$ and thus $C$ make no contribution to the geometric exceptional set.
	
	So we only need to consider rational curves on $X$. Suppose $C$ is a rational curve, then we have $a(C,f^\ast L)=2/(C\cdot f^\ast L)$.
	So the condition $a(X,L)\leq a(C,f^\ast L)$ is equivalent to $C\cdot K_X\geq -2$.
	Since $-K_X$ is ample, by the Nakai–Moishezon Criterion we only have the following cases.
	\begin{enumerate}[\upshape(1)]
		\item If $C$ is a rational curve with $C\cdot K_X=-1$, then $C\in\lvert-K_X \rvert$ and $a(C,f^\ast L)=2$.
		\item If $C$ is a rational curve with $C\cdot K_X=-2$, then $C\in\lvert-2K_X \rvert$ and $a(C,f^\ast L)=1$.
	\end{enumerate}
	In both cases, we have $(a(X,L),b(F,X,L))\leq(a(C,f^\ast L),b(F,C,f^\ast L))$ and $\dim C<\dim X$.
	Thus the geometric exceptional set precisely consists of rational points on curves of the above two cases.
\end{proof}

Note that if the Picard rank of $X$ was $\geq2$, then we would have $b(F,X,L)> b(F,C,f^\ast L)=1$ and so rational curves in $\lvert-2K_X \rvert$ would not be contained in the geometric exceptional set of $X$. So it is necessary to check that the Picard rank of $X$ is $1$ in Theorem $\ref{the1}$.

Next, we want to determine whether the geometric exceptional set is dense or not.

\begin{prop}
	Let $X$ be a del Pezzo surface of degree $1$ and Picard rank $1$.
	Then the family of rational curves in $\lvert-K_X \rvert$ will never contain a dense set of rational points, and neither will the family in $\lvert-2K_X \rvert$ for general $X$.
\end{prop}
\begin{proof}
	Let  $C\in\lvert-K_X \rvert$ be a smooth curve.
	The adjunction formula gives
	$$2g(C)-2=K\cdot C+C^2,$$
	so the genus of $C$ is $1$. 
	Thus by Bertini's Theorem, general members in $\lvert-K_X \rvert$ are elliptic curves. So there exist only finitely many rational curves in $\lvert-K_X \rvert$,
	and the set of all rational points on these curves is not dense in $X$.
	
	When $C$ is a rational curve in $\lvert-2K_X \rvert$, if we further suppose that $\overline{X}$ is general in moduli, then it is shown in \cite{LST18} that such curves are parametrized by an irreducible curve of geometric genus $\geq 2$, on which set of rational points also can not be Zariski dense.
\end{proof}
The case where $\overline{X}$ is not general in moduli remained open. It is worth mentioning that \cite[Proposition 24]{Lubbes14} combined with Proposition \ref{prop1} (2) shows that there is no rational curve parametrizing an irreducible family of rational curves in $C\in\lvert-2K_X \rvert$, so the problem is to determine whether there exists an elliptic one.


\section{Construction of the examples}
\subsection{Bitangent correspondence of rational curves in $\lvert-2K_S\rvert$}\label{sec4.1}
\ 

Let $S$ be a del Pezzo surface of degree $1$ over an algebraic closed field $k$ of characteristic zero. Then the linear system $\lvert-2K_S\rvert$ is base-point-free
and has dimension $3$.
The associated morphism $\varphi:S\rightarrow \bP^3$ is a double cover of a quadric cone $Q\subset\bP^3$ branched along the vertex $V$ of $Q$ and a smooth curve $B$. 

\begin{prop}\label{prop1}
	With notation as above, we have
	\begin{enumerate}[\upshape(1)]
		\item $B$ is a smooth curve on $Q$ cut out by a cubic surface.
		\item $B$ is of degree $6$ and genus $4$.
		\item General rational curves in $\lvert-2K_S\rvert$ correspond to hyperplane sections $H\vert_B=m_1p_1+\cdots+m_lp_l$ with $V\notin H$ where $$(m_1,\cdots,m_l)=(2,2,1,1),(3,2,1),(3,3),(4,1,1)\textrm{ or }(5,1)$$ up to rearranging the indices.
		In particular, $\varphi^\ast H$ is a rational curve in $\lvert-2K_S\rvert$ when $H$ is bitangent to $B$.
		
	\end{enumerate}
\end{prop}
\begin{proof}
	(1) and (2) are proved in \cite[Proposition 3.1]{dP1char2}.
	To prove (3), let $C\in\lvert-2K_S\rvert$ be a general irreducible curve and $\varphi^\prime:C\rightarrow H\vert_Q$ be the restriction of $\varphi$ to $C$.
	Let $\tilde{C}\rightarrow C$ be the normalization and $R^\prime$ denote the ramification divisor of $\tilde{C}\rightarrow H\vert_Q$.
	We may assume $H\vert_Q$ is smooth since $C$ is a general member.
	So we have $g(H\vert_Q)=0$.
	From the Hurwitz formula
	$$2g(\tilde{C})-2=2(g(H\vert_Q)-2)+\deg R^\prime,$$
	we obtain
	$$2g(\tilde{C})+2=\deg R^\prime.$$
	Thus $C$ is a rational curve if and only if $\deg R^\prime=2$.
	
	Let $\varphi(P)\in H\vert_Q\cap B$ be an intersection point with intersection number $m$. By inverse function theorem, there exists open neighborhoods $U_{\varphi(P)}$ and $U_P$ of $\varphi(P)$ and $P$ respectively such that there exists a commutative diagram
	\[\begin{tikzcd}
		U_P \arrow[rrrr, "\varphi\vert_{U_P}"] \arrow[d]               &  &  &  & U_{\varphi(P)} \arrow[d] \\
		\bA^2 \arrow[rrrr, "{(x,y)\mapsto(s,t)=(x^2,y)}"] \arrow[rrrr] &  &  &  & \bA^2      
	\end{tikzcd}\]
	where the vertical morphisms are \'{e}tale, and we may assume the curves corresponding to $B$ and $H\vert_Q$ in $\bA^2$ are $s=0$ and $s-t^m=0$.
	The ramification index of $\varphi^\prime$ at $P$ is $2$ when $m$ is odd and is $1$ when $m$ is even.
	The above local calculation shows that $\deg R^\prime$ is equal to the number of intersection points of $H$ and $B$ with odd multiplicity.
	It follows from (2) that $H\cdot B=6$.
	So the possible intersection numbers for $\deg R^\prime =2$ are $(2,2,1,1),(3,2,1),(3,3),(4,1,1)$ and $(5,1)$.
	Conversely, the curve $\varphi^\ast (H\vert_Q)$ is irreducible in all these cases, since if it is reducible, then the intersection multiplicity at each point in $H\vert_Q\cap B$ must be even.
	The assumption $V\notin H$ implies that $H\vert_Q$ is smooth.
	Thus we conclude.
	\qedhere
\end{proof}

\subsection{An elliptic family of bitangent planes}\label{sec4.2}
\

In this section we prove the first two parts of Theorem $\ref{the1}$.
When $m$ is a nonzero integer, then $S$ defined in Theorem \ref{the1} is a smooth surface.
It is well-known that such a smooth surface is a del Pezzo surface of degree $1$.
The second part is summarized as follows.
\begin{prop}[Theorem \ref{the1}(2)]\label{proppart2}
	With notation as in Theorem $\ref{the1}$, we have a diagram of varieties with canonical morphisms
	\[\begin{tikzcd}
		{} & {	S\times E\supset\cC} & S \\
		& \hspace{13mm}E
		\arrow["\mu", from=1-2, to=1-3]
		\arrow["\pi"', shift left=8, from=1-2, to=2-2]
	\end{tikzcd}\]
	where $$E: z_0^{3}+m^2x_0^{3}+m^2y_0^{3}=0 \subset \bP^2$$
	is a smooth curve of genus $1$ and $\cC$ is the subvariety of $S\times E$ cut out by the equation
	$$z_0^{2}z+m^2x_0^{2}x^2+m^2y_0^{2}y^2=0.$$
	Moreover, $\mu$ is dominant and general fibers of $\pi$ are rational curves in $\lvert-2K_S\rvert$.
\end{prop}

\begin{proof}
	Let $Q$ be the quadric cone in $\bP^3$ defined by $Y^2=XZ$.
	Then we have the following isomorphism
	$$\bP(1,1,2)\cong Q$$
	sending $(x:y:z)\in\bP(1,1,2)$ to $(x^2:xy:y^2:z)=(X:Y:Z:W)\in\bP^3$.
	The morphism defined by the linear system $\lvert-2K_S\rvert$ can be factored as
	$$S\overset{\varphi}{\rightarrow} \bP(1,1,2)\cong Q\subset \bP^3,$$
	where $\varphi$ maps $(x:y:z:w)\in \bP(1,1,2,3)$ to $(x:y:z)\in \bP(1,1,2)$ and is branched along the vertex of $Q$ and a smooth curve
	$$B=V(z^3+m^2x^6+m^2y^6)\subset \bP(1,1,2),$$
	which can be embedded in $\bP^3$ as
	$$B=V(W^3+m^2X^3+m^2Z^3)\cap V(XZ-Y^2)\subset \bP^3.$$
	Let $\iota:\bP^3\rightarrow\bP^3$ be the involution sending $(X:Y:Z:W)$ to $(X:-Y:Z:W)$. Then $\iota$ restricted to $B$ induces an involution of $B$.
	For each closed point $P_0=(X_0:Y_0:Z_0:W_0)\in B$, the tangent planes of $V(W^3+m^2X^3+m^2Z^3)$ at $P_0=(X_0:Y_0:Z_0:W_0)$ and $\iota(P_0)=(X_0:-Y_0:Z_0:W_0)$ are the same plane
	\begin{align*}
		H_{P_0}:=V(W_0^2W+m^2X_0^2X+m^2Z_0^2Z)\subset \bP^3,
	\end{align*}
	which is generally bitangent to $B$ and does not contain the vertex of $Q$.
	Its intersection with $\bP(1,1,2)$ is the curve
	$$C_{(x_0^\prime:y_0^\prime:z_0^\prime)}:=V(z_0^{\prime 2}z+m^2x_0^{\prime 4}x^2+m^2y_0^{\prime 4}y^2)\subset \bP(1,1,2),$$
	where $(x_0^\prime:y_0^\prime:z_0^\prime)$ satisfies
	$$E^\prime: z_0^{\prime3}+m^2x_0^{\prime 6}+m^2y_0^{\prime 6}=0.$$
	The pullback of $C_{(x_0^\prime:y_0^\prime:z_0^\prime)}$ by $\varphi$ is the corresponding rational curve on $S$.
	Changing either $x_0^\prime$ to $-x_0^\prime$ or $y_0^\prime$ to $-y_0^\prime$ does not change the corresponding rational curve on $S$, so we can take the change of coordinates $(x_0^{\prime2}:y_0^{\prime2}:z_0^\prime)=(x_0:y_0:z_0)$
	and the result is the Fermat cubic $E$. Finally, $\mu$ is dominant since the family of curves are movable on $S$.
\end{proof}


%
%

\subsection{A section}\label{secsec}
\

Assume the ground field $k$ is a field of characteristic $0$ containing a cube root of unity.
We prove (3) of Theorem \ref{the1} in this section.
\begin{prop}[Theorem \ref{the1}(3)]\label{proppart3}
	Let $s:E\rightarrow\cC$ be the section of $\pi$ given by
	\begin{equation*}
		x=\zeta y_0,\quad
		y=x_0,
	\end{equation*}
	where $\zeta$ is a cubic root of unity. Then $\mu(s(p))$ is a smooth rational point on $S$ for a general rational point $p\in E$.
\end{prop}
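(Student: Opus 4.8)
The plan is to read the conclusion as: for all but finitely many $p\in E$ the point $\mu(s(p))$ is a $k$‑rational point lying on the \emph{smooth} locus of the rational curve $C'_p:=\mu(\pi^{-1}(p))\in\left|-2K_S\right|$ (so that $s$ hits the smooth locus of the generic fibre of $\pi$, as in Theorem~\ref{the1}(3)). Write $p$ as the image of a point $P_0=(x_0:y_0:z_0)\in B$, so $z_0^3+49x_0^6+49y_0^6=0$ and, by Section~\ref{sec4.2}, $C'_p=\pi^{\ast}C_{(x_0,y_0,z_0)}=V(z_0^2z+49x_0^4x^2+49y_0^4y^2)\subset S$. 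Since $s$ is a morphism of $k$‑varieties and a section of $\pi$, for $p\in E(k)$ the point $\mu(s(p))$ automatically lies in $S(k)$ and on $C'_p$; hence the whole content is to show $\mu(s(p))\notin\Sing(C'_p)$ for general $p$. (That the displayed equations do define a section is where the hypothesis $\zeta\in k$ enters: they force $z=-49x_0^2y_0^2(\zeta^2x_0^2+y_0^2)/z_0^2$ from the conic equation and, using $\zeta^6=1$ and $z_0^3=-49(x_0^6+y_0^6)$, reduce the equation of $S$ to $w^2=z^3-z_0^3$; extracting the required square root over $k$ is precisely the polynomial splitting of Remark~\ref{rem1.6}, and is the reason $\bQ$ does not suffice. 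I take this well-definedness as given.)

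The first step is to determine $\Sing(C'_p)$ for general $p$. Recall $\pi\colon S\to Q$ is the double cover branched along $B$ together with the vertex of $Q$, and $C'_p=\pi^{-1}(C_{(x_0,y_0,z_0)})$. For general $p$ the conic $C_{(x_0,y_0,z_0)}$ is smooth, avoids the vertex, and, by the construction of Section~\ref{sec4.2} (cf.\ Proposition~\ref{prop1}(3)), is bitangent to $B$ with contact of order exactly $2$ at $P_0$ and at $\iota(P_0)=(x_0:-y_0:z_0)$, plus two further transversal intersections (since $B\cdot C_{(x_0,y_0,z_0)}=6$). A local computation with the double cover shows that over a point of contact order $2$ the curve $C'_p$ acquires an ordinary double point, whereas over a transversal intersection it is smooth; as $p_a(C'_p)=1+K_S^2=2$, these two double points exhaust $\Sing(C'_p)$. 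Finally $P_0,\iota(P_0)$ lie on the branch curve $B$, so their preimages in $S$ have $w=0$, giving
\[
\Sing(C'_p)=\{N_+,N_-\},\qquad N_{\pm}=(x_0:\pm y_0:z_0:0).
\]

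The second step is a direct comparison of projective coordinates. The point $\mu(s(p))$ has first two coordinates $\zeta y_0$ and $x_0$, so its image in $\bP(1,1,2)$ under the double cover $S\to Q$ is $(\zeta y_0:x_0:z)$, while the images of $N_{\pm}$ are $(x_0:\pm y_0:z_0)$. If $\mu(s(p))=N_+$ these images coincide, i.e.\ $\lambda x_0=\zeta y_0$ and $\lambda y_0=x_0$ for some $\lambda$, which forces $x_0^2=\zeta y_0^2$; likewise $\mu(s(p))=N_-$ forces $x_0^2=-\zeta y_0^2$. Since neither $x_0^2-\zeta y_0^2$ nor $x_0^2+\zeta y_0^2$ lies in the ideal of $B$, each of these conditions holds for only finitely many points of $B$, hence for only finitely many $p\in E$. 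Adding the finitely many $p$ with $z_0=0$ (so $C_{(x_0,y_0,z_0)}$ meets the vertex) or $x_0y_0=0$ (so $P_0=\iota(P_0)$), and the finitely many $p$ at which the bitangent configuration degenerates, we conclude that for every $p$ outside a fixed finite subset of $E$ the point $\mu(s(p))$ is not a double point of $C'_p$, hence is a smooth point; when $p\in E(k)$ it is therefore a smooth $k$‑rational point on $C'_p\subset S$.

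I expect the first step to be the main obstacle: one must be certain of the singularity type of the \emph{generic} rational curve $C'_p$ — that its only singularities are the two double points lying over the two bitangency points of $H_{P_0}$ with $B$ — which rests on the local analysis of the branched double cover (a tangential intersection produces a double point, a transversal one does not) together with the arithmetic-genus count $p_a(C'_p)=2$, and on harmlessly discarding the finitely many degenerate members of the family (planes with contact of order $>2$, tritangent planes, or planes through the vertex). The second step is then a one-line computation in $\bP(1,1,2)$, and the well-definedness of $s$ invoked above is exactly the polynomial identity already isolated in Remark~\ref{rem1.6}.
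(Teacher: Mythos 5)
There is a genuine gap, and it sits exactly where the paper's proof does its work. You explicitly ``take as given'' that the displayed equations define a section of $\pi$ over $k$, i.e.\ that the prescribed values of $x,y$ (which only determine $z$ linearly and $w$ up to sign) actually yield a $k$-rational value of $w$ along the family. That is not a formality one may presuppose: it is the entire content of the paper's argument. The paper substitutes the curve equation into the surface equation, derives the factorization $(x_0^3+y_0^3)^2w^2/49=(y_0x^2-x_0y^2)^2\bigl[(2x_0^3y_0+y_0^4)x^2+(x_0^4+2x_0y_0^3)y^2\bigr]$, observes that the vanishing of the squared factor is the multisection $(x^2,y^2)=(x_0,y_0)$ through the two nodes, and then verifies that the choice $(x,y)=(\zeta y_0,x_0)$ turns the bracketed factor into the perfect square $(x_0^3-\zeta y_0^3)^2$, giving the explicit rational expressions for $w$ and $z$. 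This perfect-square identity is precisely where the cube root of unity, the specific section, and the field $k=\bQ(\zeta)$ enter; without it, the rationality of $\mu(s(p))$ (half of the Proposition's conclusion) is unproved, and your appeal to ``$s$ is a morphism of $k$-varieties'' is circular. The part you do prove carefully --- that the only singularities of a general $C'_p$ are the two nodes over the tangency points (via the double-cover local analysis and $p_a=2$) and that the section avoids them --- is a legitimate, indeed more geometric, treatment of the easier half, which the paper disposes of by noting the squared factor's zero locus is exactly the multisection of nodes and that the chosen section differs from it generically.

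A secondary but real problem is a coordinate conflation. In the Proposition and in the paper's proof, $(x_0,y_0,z_0)$ are coordinates on the parametrizing cubic $E$ (so $z_0^3+49x_0^3+49y_0^3=0$, and the curve is $49x_0^2x^2+49y_0^2y^2+z_0^2z=0$), whereas you substitute the section formula using coordinates of a point $P_0\in B$ satisfying the sextic relation. With your reading, $x=\zeta y_0$ changes under the involution $\iota$ exchanging the two points of $B$ over $p$, so your prescription is not even obviously well defined on $E$; and your intermediate identities (e.g.\ $w^2=z^3-z_0^3$ and the degeneracy conditions $x_0^2=\pm\zeta y_0^2$) are artifacts of this reading rather than statements about the section the paper actually uses. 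The qualitative conclusion of your second step (the section point differs from the two nodes away from finitely many $p$) survives under the correct reading, but the formulas would need to be redone, and in any case the missing perfect-square verification remains the essential omission.
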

\begin{proof}
	The rational curve $C_{(x_0:y_0:z_0)}\subset S\subset \bP(1,1,2,3)$ corresponding to $(x_0,y_0,z_0)\in E$ is
	\begin{equation*}
		\begin{cases}
			w^2=m^2x^6+m^2y^6+z^3\\
			m^2x_0^2x^2+m^2y_0^2y^2+z_0^2z=0.
		\end{cases}
	\end{equation*}
	Substituting the second equation into the first one gives
	$$w^2=m^2x^6+m^2y^6-\frac{m^6(x_0^2x^2+y_0^2y^2)^3}{z_0^6}.$$
	Since $(x_0,y_0,z_0)\in E$, we have
	$$z_0^3+m^2x_0^3+m^2y_0^3=0.$$
	This yields
	\begin{align*}
		\frac{(x_0^3+y_0^3)^2w^2}{m^2}=&(x_0^3+y_0^3)^2(x^6+y^6)-(x_0^2x^2+y_0^2y^2)^3\\
		=&(y_0x^2-x_0y^2)^2[(2x_0^3y_0+y_0^4)x^2+(x_0^4+2x_0y_0^3)y^2].
	\end{align*}
	Now the multi-section $(x^2,y^2)=(x_0,y_0)$ corresponds to the two singular points of the rational curve $C_{(x_0:y_0:z_0)}$. Hence it is sufficient to find another section of $\pi$.
	In other words, we need an explicit choice of two polynomials $x$ and $y$ in $x_0,y_0$ to make $F:=(2x_0^3y_0+y_0^4)x^2+(x_0^4+2x_0y_0^3)y^2$ a perfect square.
	
	For this to happen, we set $k=\bQ(\zeta)$ where $\zeta$ is a primitive cube root of unity. And we claim that $(x,y)=(\zeta y_0, x_0)$ meets the requirement. Indeed, substituting it into $F$ yields
	\begin{align*}
		F=&(2x_0^3y_0+y_0^4)\zeta^2y_0^2+(x_0^4+2x_0y_0^3)x_0^2\\
		=&x_0^6+2(\zeta^2+1)x_0^3y_0^3+\zeta^2y_0^6\\
		=&x_0^6-2\zeta x_0^3y_0^3+\zeta^2 y_0^6\\
		=&(x_0^3-\zeta y_0^3)^2,
	\end{align*}
	which is a perfect square.
	The corresponding expressions of $w$ and $z$ are
	$$w=\pm\frac{m(y_0^3-\zeta^2x_0^3)(x_0^3-\zeta y_0^3)}{(x_0^3+y_0^3)},\quad
	z=\frac{m^2\zeta x_0^2y_0^2}{z_0^2}.$$
\end{proof}

\subsection{Galois action and Picard rank}\label{secgalois}
\

Our goal in this section is to show that the surface $S$ defined in Theorem $\ref{the1}$ has Picard rank $1$ when $m=7$ and $k=\bQ(\sqrt{-3})$.

In \cite{VA09}, Várilly-Alvarado studied the del Pezzo surfaces in $\bP(1,1,2,3)$ of the form
$$w^2=z^3+Ax^6+By^6.$$
Let $X$ be such a surface over an arbitrary field $k$ with $\ch k\neq 2,3$.
He showed that the splitting field of $X$ is $K:=k(\zeta,\sqrt[3]{2},\alpha,\beta)$,
where $\zeta$ is a cube root of unity and $\alpha,\beta$ are sixth roots of $A,B$ respectively.
He also computed the intersection matrix of the $240$ exceptional curves of $X$, and gave specific equations for an orthogonal basis of $\Pic\, X_K$, which we used to compute the intersection matrices $\bM_\sigma$ and $\bM_\rho$ below.

Now set the ground field $k$ to be $\bQ(\zeta)=\bQ(\sqrt{-3})$.
Then the splitting field of $S$ is $K:=k(\sqrt[3]{2},\sqrt[3]{7})$. 
Let $\Gamma_i$ $(i=1,\cdots,9)$ be the curves defined in \cite[Section 5]{VA09}. Then the classes of $\Gamma_i$ $(i=1,\cdots,8)$ and $\Gamma_9+\Gamma_1+\Gamma_2$ form a basis of the free abelian group $\Pic\, S_K$ \cite[Proposition 5.2]{VA09}.

The Galois group $G:=\Gal(K/k)$ is generated by the two elements
\begin{align*}
	\sigma:\sqrt[3]{2}\mapsto \zeta\sqrt[3]{2},\quad\rho:\sqrt[3]{7}\mapsto\zeta\sqrt[3]{7}.
\end{align*}
We use Magma to compute the intersection matrices $\bM_\sigma$ and $\bM_\rho$ of the above basis of $\Pic\, S_K$ after the group action by $\sigma$ and $\rho$ respectively, and the later one is
\begin{align*}
	\bM_\rho=&\left(
	\begin{array}{ccccccccc}
		-2&  0& -1& -1& -1& -1& -1& -1&  3\\
		-3& -2& -2& -2& -2& -2& -2& -2&  6\\
		-2& -1& -2& -2& -1& -2& -2& -2&  5\\
		-2& -1& -1& -2& -2& -2& -2& -2&  5\\
		-2& -1& -2& -1& -2& -2& -2& -2&  5\\
		-2& -1& -1& -1& -1& -2& -2& -1&  4\\
		-2& -1& -1& -1& -1& -1& -2& -2&  4\\
		-2& -1& -1& -1& -1& -2& -1& -2&  4\\
		-6& -3& -4& -4& -4& -5& -5& -5& 13
	\end{array}
	\right).
\end{align*}
The subspace fixed by $\bM_\rho$ is the one-dimensional subspace generated by the canonical divisor
$$K_S=( 1,  1,  1,  1,  1,  1,  1,  1, -3).$$
It is a general fact for the splitting field $K$ of $S$ that
\begin{equation}
	\Pic\, S\cong (\Pic\, S_{K})^G,
\end{equation}
and $K_S$ is always fixed by $G$.
Thus we conclude that the Picard rank of $S$ (over the number field $k=\bQ(\sqrt{-3})$) is $1$.

\subsection{The proof}
We summarize the proof of Theorem $\ref{the1}$ as follows.
\begin{proof}[Proof of Theorem \ref{the1}]
	Part (1) is discussed at the start of Section \ref{sec4.2},
	part (2) is Proposition \ref{proppart2}, and part (3) is Proposition \ref{proppart3}.
\end{proof}

\section{Zariski density of the geometric exceptional set and the counterexamples}
\subsection{Zariski density of the geometric exceptional set}\label{sec5.1}
\

In this section we prove Theorem $\ref{thm2}$.
Again we work over a field which contains $\bQ(\sqrt{-3})$ and follow the notation in Theorem $\ref{the1}$.
\begin{proof}[Proof of Theorem $\ref{thm2}$]
	By Proposition \ref{propgesofdp1}, rational points on rational curves in $\lvert-2K_S\rvert$ are contained in the geometric exceptional set if we assume the Picard rank of $S$ is $1$.
	So it is sufficient to show that the set of these rational points is dense in $S$.
	
	The elliptic curve $E$ has a dense set of rational points since it has positive rank.
	Since $\mu$ is dominant, we only need to show a general member in $\cC$ has a dense set of rational points.
	It is well-known that a rational curve has infinitely many rational points if and only if it has a smooth rational point \cite[Theorem A.4.3.1]{HS13}.
	Thus by Theorem $\ref{the1}$ (3), the proposition follows.
\end{proof}

\subsection{Existence of the surface}
In this section we prove Proposition \ref{prop1.6}.
\begin{proof}[Proof of Proposition \ref{prop1.6}]
	When $m=7$ and $k=\bQ(\sqrt{-3})$, we have confirmed that the Picard rank of $S$ is $1$ in Section \ref{secgalois}.
	
	The last part of the proposition is summarized in the following lemma.
\end{proof}
\begin{lemm}
	In the notation of Theorem \ref{the1}, we have
	\begin{enumerate}[(1)]
		\item If the prime-to-$2$-part of the integer $m$ is not a cube, then the Picard rank of $S$ is $1$.
		\item If we assume the Tate-Shafarevich groups of elliptic curves over $\bQ$ with $j$-invariant $0$ are finite, then the Mordell-Weil rank of $E$ is odd when 
		$$m\in\left\{ p: \textrm{\rm prime number}\mid p\equiv 4,7 \textrm{ \rm or }8 \,\mod\, 9 \right\}.$$
	\end{enumerate}
\end{lemm}
\begin{proof}
	Let $S(m)$ denote the surface $S$ defined by $m$ in Theorem \ref{the1}.
	Set $m_0=7$, then we have an isomorphism 
	$$f: \overline{S(m_0)}\rightarrow \overline{S(m)},\quad (x:y:z:w)\mapsto (\sqrt[3]{m/m_0}\,x:\sqrt[3]{m/m_0}\,y:z:w)$$
	over an algebraically closure $\overline{k}$.
	The Galois action $\rho:\sqrt[3]{m}\mapsto \zeta\sqrt[3]{m}$ on the Picard groups commute with $f$.
	So as long as the prime-to-$2$-part of the integer $m$ is not a cube, $\rho$ is still alive and in particular the Picard rank of $S(m)$ equals to $1$ as computed in Section $\ref{secgalois}$.
	
	Assuming finiteness of Tate-Shafarevich groups, \cite{nekovar2001parity} and \cite{dokchitser2010birch} show that the root number $W(E)$ of an elliptic curve $E$ over $\bQ$ is $(-1)^{\rk (E)}$ (the parity conjecture).
	
	The Weierstrass equation of $E$ defined in Proposition \ref{proppart2} is
	$$Y^2=X^3-432m^2,$$
	whose root number has been computed in \cite[Section 3.1.1]{VAphd}.
	By an elementary calculation, we find that when $m$ is a prime number $p$ such that $p\equiv 4,7 \textrm{ \rm or }8 \,\mod\, 9$, we always have $W(E)=-1$, and thus $\rk(E)$ is odd.
\end{proof}

\subsection{Counterexamples to the original version of Manin's Conjecture}
\

In this section we prove Corollary $\ref{cor1.6}$.
Our result is based on the following theorem.
\begin{theo}\label{the5.1}
	{\rm (\cite[Corollary 1.3]{BL16})} Let $X$ be a Fano variety over a number field $F$ and set $L=-K_X$.
	Suppose that the union of the saturated subvarieties $Y\subset X$ over $K$ is Zariski dense in $X$.
	Then for any non-empty open subset $U\subset X$ and any integer $A\geq 0$ there exists an adelic metric on $L$ such that
	\begin{equation*}
		\liminf_{T\rightarrow\infty}\frac{N(U,\cL,T)}{T}>(1+A)c(F,Z,\cL),
	\end{equation*}
	where $\cL$ denotes $L$ equipped with the adelic metric.
\end{theo}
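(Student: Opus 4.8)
The plan is to derive the inequality already from a \emph{single} saturated subvariety meeting $U$, using the latitude in the choice of adelic metric to make $\cL$ very large along that subvariety while leaving it --- and hence Peyre's constant $c(F,Z,\cL)$ --- unchanged away from the exceptional set $Z$. The geometric input is as follows: since the union of the saturated subvarieties of $X$ over $K$ is Zariski dense and $U$ is a nonempty open, there is a saturated $Y\subset X$ with $Y\cap U\neq\emptyset$ carrying a Zariski dense set of $F$-rational points (those $Y$ with non-dense rational points contribute nothing to $N(U,\cL,T)$ and may be discarded; the hypothesis still leaves a dense supply of the rest). By the defining properties of saturation with respect to $L=-K_X$ we may take $(Y,L|_Y)$ adjoint rigid with $a(Y,L|_Y)=a(X,L)=1$; since $Y$ then possesses an $a$-covering family of rational curves of small anticanonical degree, each general member being a smooth rational curve with $\gg T$ points of bounded height, one obtains for \emph{any} fixed adelic metric $\cM$ on $L|_Y$ a constant $c_Y(\cM)>0$ with
\[
	N\bigl(Y(F)\cap U,\ \cM,\ T\bigr)\ \geq\ c_Y(\cM)\,T \qquad (T\gg 0).
\]
In the situations that occur in this paper $Y$ is itself a rational curve and this is just Schanuel's theorem.

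Next I would manufacture the metric. Fix any adelic metric $\cL_0$ on $L$ and a finite place $v$ of $F$. Choose nested $v$-adic balls $B'\subset B\subset X(F_v)$ meeting $Y(F_v)$, with $B$ small enough to sit inside the $v$-adic region discarded in the computation of $c(F,Z,\cdot)$ (a neighbourhood of $Y(F_v)\subset Z$) and with $Y(F)\cap U\cap B'$ still Zariski dense in $Y$. Fix a real $\lambda>1$ and a continuous $\psi\colon X(F_v)\to[0,1]$ supported in $B$ with $\psi\equiv1$ on $B'$, and let $\cL^{(\lambda)}$ be obtained from $\cL_0$ by replacing the norm at $v$ by $(1+(\lambda-1)\psi)$ times the old one, all other places unchanged; this remains a continuous, hence adelic, metric on $L$. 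Then: (a) for $P\in Y(F)\cap U\cap B'$ the height rescales cleanly, $H_{\cL^{(\lambda)}}(P)=\lambda^{-1}H_{\cL_0}(P)$, so
\[
	N\bigl(Y(F)\cap U,\ \cL^{(\lambda)}|_Y,\ T\bigr)\ \geq\ N\bigl(Y(F)\cap U\cap B',\ \cL_0|_Y,\ \lambda T\bigr)\ \geq\ c'\,\lambda\,T \qquad (T\gg 0),
\]
with $c'>0$ the linear-growth constant for $\cL_0|_Y$ on $B'$; and (b) since $\psi$ is supported in $B$, which lies in the region invisible to $c(F,Z,\cdot)$, we have $c\bigl(F,Z,\cL^{(\lambda)}\bigr)=c(F,Z,\cL_0)$.

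Combining (a) and (b): as $Y\subset Z$ but $Y\cap U\neq\emptyset$, we have for all $T\gg 0$
\[
	\frac{N(U,\cL^{(\lambda)},T)}{T}\ \geq\ \frac{N\bigl(Y(F)\cap U,\ \cL^{(\lambda)}|_Y,\ T\bigr)}{T}\ \geq\ c'\lambda,
\]
so $\liminf_{T\to\infty}N(U,\cL^{(\lambda)},T)/T\geq c'\lambda$; choosing $\lambda$ with $c'\lambda>(1+A)\,c(F,Z,\cL_0)=(1+A)\,c\bigl(F,Z,\cL^{(\lambda)}\bigr)$ proves the theorem with $\cL=\cL^{(\lambda)}$. (When $c(F,Z,\cL_0)=0$ the conclusion is immediate from the same estimate.)

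I expect the real difficulty to be step (b): the assertion that Peyre's constant $c(F,Z,\cL)$ depends on the adelic metric only through its behaviour away from $Z$, so that a rescaling localized near $Y\subset Z$ does not disturb it. Making this precise means identifying exactly which $v$-adic region the $Z$-adjusted Tamagawa measure of Peyre--Batyrev--Tschinkel ignores and checking that it is large enough to contain a ball $B'$ in which $Y(F)$ is still Zariski dense; this is where one genuinely uses that, although $Z$ is Zariski dense, its closure in the adelic topology is negligible in the relevant measure-theoretic sense --- the whole reason one works with a thin, rather than merely closed, exceptional set. A secondary, more routine point is the normalization of local heights that yields the clean factor $\lambda^{-1}$ in (a), together with the bookkeeping that would let one replace the single-$Y$ argument by an additivity-over-finitely-many-saturated-subvarieties argument if one preferred to avoid rescaling altogether.
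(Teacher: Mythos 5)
First, a point of reference: the paper does not prove this statement at all --- Theorem \ref{the5.1} is quoted verbatim from \cite[Corollary 1.3]{BL16}, so there is no in-paper proof to compare against. Your attempt is therefore a reconstruction of the Browning--Loughran argument, and its overall skeleton (pick one saturated subvariety $Y$ meeting $U$ with $\asymp T$ points of bounded height, then rescale the metric at a single place $v$ on a small $v$-adic ball around a piece of $Y(F_v)$ so that the count on $Y$ is inflated by a factor $\lambda$) is indeed the right one.

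The genuine gap is your step (b). You assert that $c(F,Z,\cL^{(\lambda)})=c(F,Z,\cL_0)$ because the bump $\psi$ is supported in a $v$-adic region ``discarded'' or ``invisible'' in the computation of the Peyre--Batyrev--Tschinkel constant. No such discarded region exists: the constant is (essentially) $\alpha(X)\beta(X)\tau(X)$, where the Tamagawa number $\tau$ is an integral of local densities over all of $X(F_v)$ (or the adelic closure of the rational points), and the exceptional set $Z$ does not enter the definition of the measure --- removing $Z$ from the \emph{count} does not remove any $v$-adic neighbourhood of $Z$ from the \emph{integral}. Since the local density at $v$ is built from the metric on $-K_X$, multiplying that norm by $1+(\lambda-1)\psi$ genuinely changes $\tau$, roughly by an additive term of size $(\lambda-1)\,\mathrm{vol}_v(B)$ times the product of the other local volumes. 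The correct way to close the argument --- and the actual content of \cite{BL16} --- is a race between two quantities both linear in $\lambda$: the gain $c'(B')\lambda T$ in the count on $Y\cap B'$ versus the loss $(1+A)\,C(\lambda-1)\mathrm{vol}_v(B)\,T$ in the target. One wins by first shrinking $B$: because $\dim Y<\dim X$, the density constant $c'(B')$ scales like the $v$-adic volume of the ball intersected with the \emph{curve} $Y$ (radius to the power $\dim Y$, up to local degrees) while $\mathrm{vol}_v(B)$ scales like radius to the power $\dim X$, so $c'(B')/\mathrm{vol}_v(B)\to\infty$ as the radius shrinks; fixing $B$ small enough and then taking $\lambda$ large gives the inequality. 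Your parts (a) and the use of Schanuel on the rational curve are fine, but as written the proof rests on a property of the constant that it does not have, and the codimension argument that actually replaces it is missing.
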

The notion of saturated subvarieties is defined in \cite[Definition 1.1]{BL16}.

\begin{proof}[Proof of Corollary \ref{cor1.6}]
	In our case, an irreducible curve $f:C\rightarrow S$ is saturated if and only if there exists a metrization on $L$ such that
	\begin{equation*}
		0<\liminf_{B\rightarrow\infty}\frac{N(C,f^\ast \cL,B)}{B}<\infty.
	\end{equation*}
	Since Manin's Conjecture is confirmed for $\bP^1$ for any metrized ample line bundle without removing an exceptional set \cite{Schanuel1979},
	any rational curve in $\lvert-K_S\rvert$ and $\lvert-2K_S\rvert$ is a saturated subvariety of $S$. Union of such curves is nothing but the geometric exceptional set of $S$ by Proposition \ref{propgesofdp1}, which is dense by Theorem \ref{thm2}. Thus Theorem \ref{the5.1} applies.
\end{proof}

\section{Further computations}
Towards a further understanding of the geometric exceptional set for $S$, we use a systematic method to compute rational curves in $\lvert-2K_S\rvert$. Though it seems difficult to determine all of them due to the computational complexity, we obtained four elliptic families of them, including the one described in Section 4.2.
All computer calculations were carried out using Magma \cite{magma}.

As shown in Section \ref{sec4.1}, rational curves in  $\lvert-2K_S\rvert$ correspond to bitangent planes of $B$, where $B$ is a curve of degree $6$ on a quadric cone $Q\subset\bP^3$. The projective dual $B^{\vee}\subset(\bP^3)^\vee$ parametrizes all planes tangent to $B$.  It has been shown in \cite[Propsition 22]{Lubbes14} that for such a curve $B$, the singular locus $\Sing(B^{\vee})$ of $B^{\vee}$ consists of the following irreducible components (after relabeling):
\begin{enumerate}[(1)]
	\item $S_1$ is a cuspidal curve dual to osculating planes of $B$.
	\item $S_2$ is a conic dual to tritangent planes of $B$ which are tangent to $Q$.
	\item $S_i$ for $3\leq i\leq N$ are irreducible components of other bitangent planes of $B$, where $N\leq 14$. They parametrize rational curves in $\lvert-2K_S\rvert$.
\end{enumerate}
One can calculate $B^{\vee}$ explicitly in the following way. Without loss of generality, we may let the defining polynomial of $S$ be
$$w^2=z^3+x^6+y^6.$$
Then $B$ is defined by polynomials
$$U:=X^3+Z^3+W^3\quad \text{and}\quad V:=XZ-Y^2$$
in $\bP^3=\bP(X,Y,Z,W)$. A plane in $\bP^3$ is given by
$$H:=kX+lY+mZ+nW,$$
which is dual to a point $(k,l,m,n)$ in the dual space $(\bP^3)^\vee$.
The polynomials $U,V$ and $H$ define a variety $T$ in $\bP^3\times (\bP^3)^\vee$.
Let $M:=\partial(U,V,H)/\partial(x,y,z,w)$ be the Jacobian matrix of $T$ relative to the first $\bP^3$.
The minors of order $3$ of $M$ together with $U,V,H$ define a subvariety $T_0$ of $T$. Then the dual variety $B^{\vee}$ is nothing but the image of $T_0$ along the canonical projection $\bP^3\times (\bP^3)^\vee\rightarrow (\bP^3)^\vee$.

It turns out that the surface $B^{\vee}$ is defined by a polynomial of degree $18$. Its singular locus $\Sing B^{\vee}$ is a curve of degree $170$ defined by $4$ polynomials.
It seems impractical to compute irreducible components of $\Sing B^{\vee}$ directly.
Nonetheless, following the same idea of finding involutions of $B$ as in Section $\ref{sec4.2}$, we found four of them in case $(3)$:
\begin{itemize}
	\item $S _3:=V(l,\ k^6 - 2k^3m^3 - 2k^3n^3 + m^6 - 2m^3n^3 + n^6)$,
	\item $S _4:=V(k - m,\ 
	l^6 - 6l^4m^2 + 9l^2m^4 - 12l^2mn^3 + 4m^3n^3 - 4n^6)$,
	\item $S _5:=V(k + (\zeta  + 1)m,\ 
	l^6 + 6(\zeta  + 1)l^4m^2 + 9\zeta l^2m^4 - 12\zeta l^2mn^3 + 4m^3n^3 - 4n^6)$,
	\item $S_6:=V(k - \zeta m,\  
	l^6 - 6\zeta l^4m^2 -9 (\zeta  +1)l^2m^4 + 12(\zeta  + 1)l^2mn^3 + 4m^3n^3 -
	4n^6)$,
\end{itemize}
where $\zeta$ is a cube root of unity.
They are all elliptic components where $S_3$ is the one described in Section $\ref{sec4.2}$ (after a scalar multiplication of variables).
We found these irreducible components by considering the following involutions of $B$ respectively:
\begin{itemize}
	\item $\iota_3:(X:Y:Z:W)\mapsto (X:-Y:Z:W)$,
	\item $\iota_4:(X:Y:Z:W)\mapsto (Z:Y:X:W)$,
	\item $\iota_5:(X:Y:Z:W)\mapsto (\zeta Z:Y:\zeta^2 X:W)$,
	\item $\iota_6:(X:Y:Z:W)\mapsto (\zeta^2 Z:Y:\zeta X:W)$,
\end{itemize}
where $\iota_3$ is the same involution considered in Section $\ref{sec4.2}$.
In fact, these four involutions $\iota_i$ have the same property that the direction of the vector $\iota_i(P)-P$ does not vary with respect to $P\in B$.
A direct calculation by Magma \cite{magma} shows that the irreducible curves $S_3,S_4,S_5, S_6$ are all of geometric genus $1$. So $S_4,S_5$ and $S_6$ have the potential to produce dense subsets of the geometric exceptional set as well.

The total degree of $S_1,S_2,\cdots,S_6$ turns out to be $70$, so there are still not more than $8$ other components of $\Sing B^{\vee}$ of total degree $100$.

\bibliographystyle{alpha}
\bibliography{dp1-bibliography}

\end{document}